\newtheorem{thm}{Theorem}[section]
\newtheorem*{thm*}{Theorem}
\newtheorem{lem}[thm]{Lemma}
\newtheorem{cor}[thm]{Corollary}
\theoremstyle{definition}
\newtheorem{defn}[thm]{Definition}
\theoremstyle{remark}
\newtheorem{rem}[thm]{Remark}
\numberwithin{equation}{section}
\renewcommand{\epsilon}{\varepsilon}
\renewcommand{\phi}{\varphi}
\newcounter{ris}
\renewcommand{\r}{\refstepcounter{ris}%
                  Fig. \arabic{ris}}
\begin{document}
	\ifpdf
	\DeclareGraphicsExtensions{.mps, .jpg, .tif, .pdf}
	\else
	\DeclareGraphicsExtensions{.eps, .jpg, .mps}
	\fi

\title{Inscribing a regular octahedron into polytopes}

\author{Arseniy~Akopyan}
\address{Arseniy Akopyan, Institute for Information Transmission Problems RAS\\ 
			Bolshoy Karetny per. 19, Moscow, Russia 127994  \newline      
B.~N.~Delone International Laboratory ``Discrete and Computational Geometry'', P.~G~ Demidov Yaroslavl State University., Sovetskaya st. 14, Yaroslavl', Russia 150000}
\email{akopjan@gmail.com}

\author{Roman~Karasev}
\address{Roman Karasev, Dept. of Mathematics, Moscow Institute of Physics and Technology, Institutskiy per. 9, Dolgoprudny, Russia 141700
\newline
B.~N.~Delone International Laboratory ``Discrete and Computational Geometry'', P.~G~ Demidov Yaroslavl State University., Sovetskaya st. 14, Yaroslavl', Russia 150000}
\email{r\_n\_karasev@mail.ru}
\urladdr{http://www.rkarasev.ru/en/}
\thanks{The research of both authors is supported by the Dynasty Foundation, the President's of Russian Federation grant MD-352.2012.1, the Russian Foundation for Basic Research grant 10-01-00096, the Federal Program ``Scientific and scientific-pedagogical staff of innovative Russia'' 2009--2013, and the Russian government project 11.G34.31.0053. The research of A.V.~Akopyan is also supported by the Russian Foundation for Basic Research grant 11-01-00735 and the research of R.N.~Karasev is also supported by the Russian Foundation for Basic Research grant 10-01-00139.}
\maketitle

\begin{abstract}
We prove that any simple polytope (and some non-simple polytopes) in $\mathbb R^3$ admits a regular octahedron inscribed into its surface.
\end{abstract}


\section{Introduction}

The famous theorem of Schnirelmann \cite{shnirelman1944oncerain} (see also \cite{klee1991old}) asserts that for every smooth simple closed curve $\gamma$ in the plane there exists a square $Q$ such that all four vertices of $Q$ lie on $\gamma$.

In the thesis of Vladimir~Makeev~\cite{makeev2003univerally} the following theorem was proved (reproved and generalized for higher prime power dimensions in~\cite{karasev2009inscribing,karasev2010equipartitionbyfans}):

\begin{thm}
\label{thm:insrccross-3}
Let $H$ be a smooth surface embedded into $\mathbb R^3$ and homeomorphic to the sphere $S^2$. Let $C$ be some $\mathbb Z_3$-symmetric octahedron. Then there exists an octahedron $C'\subset\mathbb R^3$ similar to $C$ with all its vertices lying on $H$.
\end{thm}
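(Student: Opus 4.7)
The plan is to realize inscribed octahedra as zeros of a $\mathbb Z_3$-equivariant map between spaces of compatible dimensions, and to produce such a zero by equivariant-topological obstruction theory.

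Let $X$ denote the space of labeled $\mathbb Z_3$-symmetric octahedra similar to $C$ in $\mathbb R^3$, where a ``label'' is a choice of cyclic ordering of the three vertices within each of the two $3$-orbits of the symmetry. Then $X$ is parameterized by the center (in $\mathbb R^3$), the axis direction (in $S^2$), the scale (in $(0,\infty)$), and a rotation about the axis (in $S^1$), so $\dim X = 7$; the group $G=\mathbb Z_3$ acts freely on $X$ by diagonally cycling the labels, which geometrically is rotation about the symmetry axis by $2\pi/3$. Since $H\cong S^2$ is smoothly embedded in $\mathbb R^3$, it bounds a region and carries a continuous signed-distance function $d_H\colon\mathbb R^3\to\mathbb R$ with $d_H^{-1}(0)=H$. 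Define the obstruction map
\[
\Phi\colon X\longrightarrow\mathbb R^{6},\qquad \Phi(\sigma)=\bigl(d_H(v_1(\sigma)),\dots,d_H(v_6(\sigma))\bigr),
\]
with the vertices $v_i(\sigma)$ ordered compatibly with the two $G$-orbits. Then $\Phi$ is $G$-equivariant, where $G$ acts on $\mathbb R^6$ by two $3$-cycles of coordinates, and the inscribed octahedra are precisely the zeros of $\Phi$. As a $G$-representation, $\mathbb R^6\cong 2\cdot\mathbb R\oplus 2\cdot V$, where $V$ is the unique nontrivial real $2$-dimensional irreducible representation of $\mathbb Z_3$.

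The next step is to choose a compact $G$-invariant region $X_0\subset X$ and show that the restriction $\Phi|_{\partial X_0}\colon\partial X_0\to\mathbb R^6\setminus\{0\}$ has nonzero equivariant index; then $\Phi^{-1}(0)\cap X_0\ne\emptyset$ by equivariant obstruction theory. A natural candidate for $X_0$ is the product of the inward $\delta$-offset of the region bounded by $H$, the sphere $S^2$ of axis directions, the circle $S^1$ of rotations, and an interval $[\varepsilon,M]$ of scales. When the scale equals $\varepsilon$ the six vertices cluster near the center, so all six values $d_H(v_i)$ lie near $d_H(\text{center})\le -\delta$ and $\Phi$ lands near the negative diagonal of the trivial subrepresentation $2\cdot\mathbb R$; when the scale equals $M$ the octahedron engulfs $H$ and all six values are positive. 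This sign reversal in the trivial summand, together with the $S^2$-sweep of the axis direction, should give rise to a nontrivial equivariant Euler class, and the argument then reduces to a classical $\mathbb Z_3$-Borsuk--Ulam statement for equivariant maps into $2\cdot V$ parameterized by $S^2$.

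The hard part is this final equivariant cohomological computation: pinning down $X_0$ precisely, checking that $\Phi\ne 0$ on all of $\partial X_0$, and isolating the characteristic class that obstructs a nowhere-zero extension to the interior. Degenerate configurations in which distinct vertices of $C$ collapse are ruled out automatically because $C$ has positive diameter and the scale is bounded away from $0$ inside $X_0$. The essential topological input is that the representation $V$ has no nonzero $\mathbb Z_3$-fixed vector, so that the $2\cdot V$-projection of $\Phi$ is the genuinely obstructed component, while the $2\cdot\mathbb R$-component is what witnesses the overall sign reversal between small and large scales; combining both pieces of information is what makes the equivariant obstruction nonzero and delivers the desired inscribed octahedron.
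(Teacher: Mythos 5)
A point of context first: the paper does not actually prove Theorem~\ref{thm:insrccross-3} --- it is quoted from Makeev's thesis and from the references \cite{karasev2009inscribing,karasev2010equipartitionbyfans}, and Section~2 only records the output of those proofs, namely that the inscribed octahedra support a nontrivial $\mathbb Z_3$-equivariant $1$-dimensional homology class in the $7$-dimensional configuration space of similarity transforms (dually, a nonzero compact-support cohomology class). Your configuration-space/test-map setup is the same scheme those references use, so you are on the standard track rather than a genuinely different route. Two inaccuracies in the setup before the main issue: the space of placements is $\mathbb R^+\times\mathbb R^3\times SO(3)$, and $SO(3)$ is a \emph{nontrivial} circle bundle over $S^2$ (it is $\mathbb{RP}^3$, not $S^2\times S^1$), so $X_0$ is not the product you write; and since $\dim X=7$ exceeds the target dimension $6$, the zero set of $\Phi$ is generically a $1$-manifold, so there is no ``equivariant index'' of $\Phi|_{\partial X_0}$ in the usual degree sense --- the correct invariant is a relative equivariant Euler class whose Poincar\'e dual is precisely the $1$-homology class of the solution set that the paper refers to.

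There are two genuine gaps. First, your $X_0$ does not satisfy $\Phi\neq 0$ on all of $\partial X_0$: an inscribed octahedron can perfectly well have its center at depth exactly $\delta$ inside $H$ (any inscribed octahedron of circumradius at least $\delta$ can be positioned so), so zeros of $\Phi$ may lie on the face of $\partial X_0$ where the center sits on the inner offset surface. This is fixable (let the center range over a large ball containing $H$, so that on that face all six vertices lie outside $H$), but as stated the domain is wrong. Second, and much more seriously, the obstruction computation you defer as ``the hard part'' is the entire content of the theorem, and the ``classical $\mathbb Z_3$-Borsuk--Ulam statement'' you invoke does not exist in the form you need: because $\mathbb R^6\cong 2\cdot\mathbb R\oplus 2\cdot V$ contains a rank-$2$ trivial summand, the equivariant Euler class of the full representation bundle vanishes ($e(2\cdot\mathbb R\oplus 2\cdot V)=e(2\cdot\mathbb R)\smile e(V)^2=0$), so no purely representation-theoretic freeness argument can produce the nonzero relative class. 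One must genuinely combine the boundary sign data in the trivial summand with $e(V)^2\neq 0$ and then show the resulting relative class is nonzero --- in the cited proofs this is done by exhibiting and computing the solution $1$-cycle for a model surface and transporting it by a deformation during which solutions are shown neither to escape to infinity nor to degenerate. Without that computation the argument does not establish existence.
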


\begin{rem}
Here by \emph{$\mathbb Z_3$-symmetric} we mean the following: The group $\mathbb Z_3\subset SO(3)$ acts on $C$ by cyclically permuting three pairs of its opposite vertices so that this action is extended to an action by isometries on the whole $\mathbb R^3$.

The word \emph{similar} here means equivalent up to a similarity transform with positive determinant, that is a composition of a proper rigid motion and a positive homothety. By \emph{inscribing} a polytope $P$ into a surface $H$ we will always mean finding its similar copy such that all vertices of $P$ lie on $H$.
\end{rem}

It is known (see~\cite{klee1991old,matschke2009square,paklectures} for example) that squares in the plane can be also inscribed into any polygonal simple curve; the approximation by smooth curves and going to the limit works well in this case. Informally, the key feature here is that if you look at the square from some direction in the plane then you do not see one of its vertices. The problem of inscribing a square into any continuous simple closed curve in the plane remains unsolved, the approximation by smooth curves seems to be insufficient in this case, see~\cite{matschke2009square} for further information about this.

In $\mathbb R^3$ the situation is different even for regular octahedra: One can see all the vertices of an octahedron from some directions. Thus we have to be careful when going to the limit and this is the main content of this paper. Note also that in the plane there exists a direct proof~\cite{pak2008discrete} of the Schnirelmann theorem for polygonal curves, while in this paper we cannot avoid using the smooth case and going to the limit. 

The main result of this paper is:

\begin{thm}
\label{thm:simple-inscr}
Suppose $P$ is a simple convex polytope in $\mathbb R^3$. Then there exists a regular octahedron inscribed into the surface $\partial P$.
\end{thm}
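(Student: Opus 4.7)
The plan is to approximate $\partial P$ by smooth convex surfaces, apply Theorem~\ref{thm:insrccross-3} to each approximation, and pass to a subsequential limit. The work is in preventing the inscribed octahedra from collapsing to a point.

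First I would fix a sequence $H_n$ of smooth strictly convex surfaces, each homeomorphic to $S^2$, that converge to $\partial P$ in the Hausdorff metric; concretely, one can take $H_n$ to be the boundary of a smoothing of $P$ obtained by convolving its indicator function with a mollifier of shrinking scale $\varepsilon_n\to 0$. Since the regular octahedron is $\mathbb{Z}_3$-symmetric, Theorem~\ref{thm:insrccross-3} produces, for each $n$, a regular octahedron $O_n$ with all six vertices on $H_n$. The diameters of $O_n$ are bounded by $\diam P + o(1)$, so, after passing to a subsequence, the labelled vertex sets converge and we obtain a (possibly degenerate) regular octahedron $O_\infty$ whose vertices lie on $\partial P$. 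If $\diam O_\infty>0$, the conclusion follows.

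It therefore remains to rule out collapse of $O_n$ to a single point $p\in\partial P$. I would argue by rescaling: dilate the picture by a factor $1/\diam O_n$ centered at $p$ (adjusting $\varepsilon_n$ so that $\varepsilon_n/\diam O_n\to 0$, via a diagonal argument if needed). The rescaled surfaces converge locally to the boundary of the tangent cone $T_pP$, and the rescaled $O_n$ converge to a non-degenerate regular octahedron $O^\ast$ with all six vertices on $\partial T_pP$. Since $P$ is simple, $\partial T_pP$ is one of three types: a plane (if $p$ lies in the relative interior of a $2$-face), a dihedral wedge of two half-planes (if $p$ lies in the relative interior of an edge), or the boundary of a trihedral cone with three planar sectors (if $p$ is a vertex, using simplicity). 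The first case is immediate: a regular octahedron has non-coplanar vertices. The dihedral case I would exclude by a case analysis of the partition of the six vertices between the two sectors, showing that the rigid distance pattern of a regular octahedron—only two distinct pairwise distances—forces a coplanarity that is incompatible with the wedge having non-zero opening angle.

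The main obstacle is the trihedral case, which is where simplicity of $P$ enters decisively: a non-simple vertex would admit an arbitrary polygonal link, leaving much more room for a small inscribed octahedron. I would handle the trihedral case by fixing the cone and studying, for each of the few combinatorial ways the six octahedron vertices can be distributed among the three sectors, the resulting system of coplanarity conditions through the apex. Combined with the regular octahedron's rigidity (vertex set invariant under its own large symmetry group and pairwise distances taking only two values), each such distribution should be ruled out by an explicit linear-algebraic/angular check. Granting this final geometric lemma, the collapse scenario is excluded, $O_\infty$ is non-degenerate, and the theorem follows.
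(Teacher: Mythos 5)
Your overall strategy---smooth, inscribe, rescale at a collapse point, and analyze the tangent cone---is the right skeleton and matches the paper's first steps, but the step you ``grant'' at the end is false, and the gap it leaves cannot be closed by compactness alone. The paper's Lemma~\ref{lem:circumscirbed angle} characterizes exactly which trihedral solid angles admit a regular octahedron inscribed into their boundary, and Corollary~\ref{cor:small angle} shows these genuinely exist (e.g.\ any trihedral cone all of whose facet angles are less than $\pi/6$ is \emph{special}). So no ``linear-algebraic/angular check'' can rule out the trihedral case: the configuration you are trying to exclude occurs. There is a partial repair---if the tangent cone at the collapse point is special, then scaling the octahedron inscribed in the cone toward the apex yields an inscribed octahedron for $P$ itself, since the cone and $\partial P$ agree near the vertex---but this only converts the problem into: exclude collapse at a vertex whose cone is \emph{not} special.

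That remaining case is where your argument breaks in a way that cannot be fixed with Theorem~\ref{thm:insrccross-3} plus limits. When the cone $A(v)$ is not special, its smoothing $A_\epsilon(v)$ can still carry inscribed regular octahedra of size comparable to $\epsilon$ (your rescaling with $\varepsilon_n/\diam O_n$ bounded away from $0$ and $\infty$ lands exactly here, and you cannot force $\varepsilon_n/\diam O_n\to 0$ because $\diam O_n$ is not under your control). So it is entirely possible that \emph{every} octahedron produced by Theorem~\ref{thm:insrccross-3} on $H_n$ lives at the smoothing scale near a non-special vertex, and the subsequential limit degenerates. The paper's resolution is not to exclude these octahedra individually but to use the stronger homological output of \cite{karasev2009inscribing,karasev2010equipartitionbyfans}: the inscribed octahedra of a smooth body carry a nontrivial $\mathbb Z_3$-equivariant $1$-homology class, this class decomposes as a sum of local contributions $\phi(A(v))\in\mathbb F_3$ over the vertices, $\phi$ is locally constant on the set of non-special angles, and that set is arcwise connected (Lemma~\ref{lem:non-sing-conn}, itself resting on the classification in Lemmas~\ref{lem:circumscirbed angle} and~\ref{lem:circumscribed angle tight}) and contains angles near a halfspace where $\phi=0$. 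Hence every local contribution vanishes, the sum cannot equal the nonzero total class, and some inscribed octahedron must persist at a bounded scale. This homological bookkeeping, together with the classification of special angles, is the actual content of the theorem and is absent from your proposal.
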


\begin{rem}
The case of non-simple polytopes is considered in Section~\ref{sec:non-simple-sec}. In this case we are able to establish the existence of an inscribed octahedron under certain additional assumptions on its solid angles (see Theorem~\ref{thm:non-thm:simple-inscr}).
\end{rem}

\begin{rem}
Unlike the results for inscribing into smooth surfaces~\cite{karasev2009inscribing,karasev2010equipartitionbyfans}, here we only consider inscribing of a regular octahedron, leaving out the case of inscribing an octahedron similar to a given $\mathbb Z_3$-symmetric octahedron. This case seems to be out of reach of the methods presented here.
\end{rem}

{\bf Acknowledgments.}
The authors thank Benjamin Matschke for the discussion and helpful remarks, thank the unknown referees for numerous remarks and comments, and thank Igor~Pak for useful advice on improving the readability of this text.

\section{Approximation of $\partial P$ by smooth surfaces}

We are going to use the following way to approximate a polytope by smooth bodies:

\begin{defn}
Denote by $P_\epsilon$ the union of all $\epsilon$-balls that are contained in $P$
$$
P_\epsilon = \bigcup_{B_\epsilon(x)\subseteq P} B_\epsilon.
$$
\end{defn}

The body $P_\epsilon$ has a smooth boundary and, by the known results, admits an inscribed regular octahedron. Moreover, in~\cite{karasev2009inscribing,karasev2010equipartitionbyfans} it is proved that there is a nontrivial $\mathbb Z_3$-equivariant $1$-homology class of such octahedra in the configuration space of all octahedra, for which we take the space of similarity transforms with positive determinant $S_3$, which is topologically equal to $\mathbb R^+\times \mathbb R^3\times SO(3)$. This space has an action of the symmetry group of the octahedron, and out of this group we are going to use the $\mathbb Z_3$ that cyclically permutes the three axes of the octahedron. 

In certain ``generic'' case the set $\mathcal I$ of inscribed octahedra is indeed a smooth $1$-manifold and itself represents a nonzero class $[\mathcal I] \in H_1 (S_3/\mathbb Z_3; \mathbb F_3)$. To handle degenerate cases (see~\cite{karasev2009inscribing,karasev2010equipartitionbyfans}) it is useful to replace $[\mathcal I]$ with its Poincar\'e dual class from some relative cohomology, which is actually convenient to treat as a member of the compact support cohomology $\zeta\in H^{9}_c(S_3/\mathbb Z_3; \mathbb F_3)$ (see~\cite{borel1960homology} as a reference for compact support cohomology). Using the technique of compact support cohomology, we restate the results of~\cite{karasev2009inscribing,karasev2010equipartitionbyfans} as follows: There exists a nonzero obstruction in cohomology $\zeta\in H^{9}_c(S_3/\mathbb Z_3; \mathbb F_3)$ such that the set of inscribed octahedra $\mathcal I$ has $\zeta$ supported in its arbitrarily small neighborhood even in the degenerate cases, when $\mathcal I$ itself does not look like a $1$-cycle geometrically.

From here on, we argue in terms of a $1$-dimensional homology cycle of $\mathcal I$ informally, having in mind that everything becomes rigorous when passing to codimension $1$ compact support cohomology.

Now let $\epsilon$ tend to zero. If the diameters of the inscribed octahedra of $P_\epsilon$ do not tend to zero, then we obtain an inscribed octahedron for $P$ by standard compactness considerations. Hence for the rest of the proof we assume the contrary: Let the maximum diameter of inscribed octahedra for~$P_\epsilon$ be at most $\delta(\epsilon)$ and $\displaystyle \lim_{\epsilon\to+0} \delta(\epsilon) = 0$.
Denote the set of inscribed octahedra for $P_\epsilon$ by $\mathcal I_\epsilon\subset S_3$.

Since $\delta(\epsilon)$ tends to zero, the octahedra from $\mathcal I_\epsilon$ tend (say, in Hausdorff metric) to points at the boundary of $P$. The first observation is that they obviously cannot tend to a relative interior point of a facet. Moreover, the detailed analysis near an edge shows that for small enough $\epsilon$ the octahedra in $\mathcal I_\epsilon$ cannot tend to an interior point of an edge. Indeed, if we project $\partial P$ along an edge then we obtain a plane angle $A$, the smoothening $\partial P_\epsilon$ being projected to a smoothened plane angle $A_\epsilon$. Let $C$ be an octahedron inscribed into $\partial P_\epsilon$; it projects to a quadrilateral or hexagon $C'$ inscribed into $\partial A_\epsilon$. Now it remains to note that $C'$ is centrally symmetric and we cannot see one of its vertices from any direction (of course, the invisible vertex depends on the direction), while we can see the entire $\partial A_\epsilon$ from some directions. This is a contradiction and we conclude:

\begin{lem}
If there is no octahedron inscribed into $\partial P$ then all octahedra in $\mathcal I_\epsilon$ tend to the vertices of $P$ in Hausdorff metric. For small enough $\epsilon$ the family $\mathcal I_\epsilon$ becomes a disjoint union of the sets $\mathcal I_\epsilon(v)$, corresponding to different vertices $v\in P$.
\end{lem}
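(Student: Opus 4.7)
The plan is to rule out, for any non-vertex point $p\in\partial P$, convergence of a sequence $C_n\in\mathcal I_{\epsilon_n}$ (with $\epsilon_n\to 0$) to $p$ in Hausdorff metric. Since we have already reduced to the setting $\diam C_n\le\delta(\epsilon_n)\to 0$, such accumulation at $p$ is the only way for the first claim to fail; discreteness of the vertex set of $P$ then yields the disjoint decomposition into the $\mathcal I_\epsilon(v)$ for $\epsilon$ small enough.

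If $p$ lies in the relative interior of a facet $F$, I would choose a ball $B$ around $p$ disjoint from the other facets. For $\epsilon$ smaller than the radius of $B$, the Minkowski-sum definition shows $\partial P_\epsilon\cap B$ is exactly a piece of the supporting plane of $F$. For large $n$ the octahedron $C_n\subset B$, so its six vertices would be coplanar---contradicting the non-coplanarity of the vertices of a non-degenerate regular octahedron.

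If $p$ lies in the relative interior of an edge $e$ with dihedral angle $\alpha\in(0,\pi)$, let $\pi_e$ denote orthogonal projection along $e$. Locally $\pi_e(P)$ is a plane angle $A$ of opening $\alpha$ and $\pi_e(P_\epsilon)=A_\epsilon$ is its inner $\epsilon$-smoothening; the outward unit normals along $\partial A_\epsilon$ form an arc $N\subset S^1$ of length $\pi-\alpha<\pi$, so one can pick $d\in S^1$ with $\langle n,d\rangle<0$ for every $n\in N$. Setting $C'_n:=\pi_e(C_n)$, the projection is centrally symmetric and the six projected vertex points all lie on $\partial A_{\epsilon_n}$. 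Two of these projected points can coincide only when $e$ is parallel to an axis of $C_n$, in which case the centroid of $C'_n$ would have to sit both on $\partial A_{\epsilon_n}$ and in the relative interior of the square $C'_n\subset\overline{A_{\epsilon_n}}$---ruled out. So $C'_n$ is a genuine centrally symmetric hexagon.

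The crux is then the elementary observation that for any centrally symmetric convex polygon $K$ and any direction $d\in S^1$, some vertex $v$ of $K$ has normal cone $N_v\subset\{n:\langle n,d\rangle\ge 0\}$. Indeed, the normal cones of $K$'s vertices tile $S^1$, each of extent less than $\pi$, and come in opposite pairs under $n\mapsto -n$; if every cone met the open half $\{\langle n,d\rangle<0\}$ then each would have to straddle the equator $\{\langle n,d\rangle=0\}$ and thus contain one of $\pm d^\perp$, but these two points lie in at most two cones each, for a total of at most four straddling cones---impossible for six. Applied to $C'_n$ this gives a vertex $v\in\partial A_{\epsilon_n}$ whose outward normal $n(v)$ of $\partial A_{\epsilon_n}$, being a support normal of $C'_n$ at $v$, must lie in $N_v\cap N$; but then $\langle n(v),d\rangle\ge 0$ and $\langle n(v),d\rangle<0$ simultaneously, contradiction. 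The most delicate step in the plan is exactly this normal-cone dichotomy: it requires careful bookkeeping between open and closed half-planes and separate handling of the quadrilateral degeneration before the hexagonal case is reached.
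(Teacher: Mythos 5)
Your argument follows the same route as the paper's: project $\partial P_\epsilon$ along the edge, observe that the projected octahedron is a centrally symmetric polygon inscribed into the convex arc $\partial A_\epsilon$, and contradict this with the fact that the whole of $\partial A_\epsilon$ is visible from some direction (its normals span an arc of length $\pi-\alpha<\pi$). Your normal-cone formulation is a correct rigorization of the paper's informal ``one vertex is always invisible.'' There is, however, one concrete gap: the claim that two projected vertices can coincide \emph{only} when $e$ is parallel to an axis of $C_n$ is false. In a regular octahedron every non-antipodal pair of vertices spans an edge, so two projected vertices also coincide when $e$ is parallel to an \emph{edge} of $C_n$; then two adjacent pairs collapse and $C'_n$ is a genuine parallelogram with four distinct vertices, all lying on $\partial A_{\epsilon_n}$ --- precisely the ``quadrilateral'' case the paper explicitly allows for. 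Your centroid argument does not apply there (no vertex projects to the center of $C'_n$), and your counting argument does not close: with four normal cones and the two points $\pm d^\perp$ each lying in at most two cones, ``at most four straddling cones'' is no contradiction.

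The missing case is easily repaired, in a way that also streamlines your hexagon count. If every normal cone met both open half-planes $\{\langle n,d\rangle>0\}$ and $\{\langle n,d\rangle<0\}$, then each cone, being a connected closed arc, would contain one of $\pm d^\perp$; but a closed arc of length less than $\pi$ having $d^\perp$ as an interior point or as an endpoint cannot have all of its other points avoid one of the two open halves and still meet both --- concretely, an arc of length $<\pi$ emanating from $d^\perp$ lies, apart from $d^\perp$ itself, entirely in a single open half-plane. Hence some cone is contained in a closed half-plane $\{\pm\langle n,d\rangle\ge 0\}$, and by central symmetry its antipodal cone gives the desired vertex with $N_v\subset\{\langle n,d\rangle\ge 0\}$, for the parallelogram and the hexagon alike. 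With this repair your proof is complete and essentially coincides with the paper's.
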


The decomposition of $\mathcal I_\epsilon$ into the sets $\mathcal I_\epsilon(v)$ corresponds to decomposition of the $1$-homology class of inscribed octahedra into a smooth body into a sum of $1$-homology classes. Hence we have proved: 

\begin{lem}
There exists a vertex $v$, such that for arbitrarily small~$\epsilon$ the octahedra $\mathcal I_\epsilon(v)$ (inscribed into $\partial P_\epsilon$ near $v$) carry a nontrivial $\mathbb Z_3$-equivariant $1$-homology.
\end{lem}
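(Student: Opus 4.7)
The plan is to argue by additivity of the $\mathbb Z_3$-equivariant $1$-homology class under the disjoint decomposition provided by the previous lemma. For small enough $\epsilon$ we have
\[
\mathcal I_\epsilon \;=\; \bigsqcup_{v\in V(P)} \mathcal I_\epsilon(v),
\]
with the pieces approaching pairwise distinct points of $\partial P$, and the total class $[\mathcal I_\epsilon]\in H_1(S_3/\mathbb Z_3;\mathbb F_3)$ is nonzero by the smooth case recalled in the previous section.

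First I would check that the decomposition is compatible with the $\mathbb Z_3$-action, so that it descends to the quotient $S_3/\mathbb Z_3$. The generator of $\mathbb Z_3\subset SO(3)$ cyclically permutes the three coordinate axes of the octahedron, hence it fixes the octahedron's center. Therefore, if $C\in\mathcal I_\epsilon(v)$ lies in a small Hausdorff-neighborhood of the vertex $v$, then each $\mathbb Z_3$-translate of $C$ has the same center and is again a small octahedron near $v$, and so belongs to $\mathcal I_\epsilon(v)$. Thus every $\mathcal I_\epsilon(v)$ is $\mathbb Z_3$-invariant and the decomposition descends to a disjoint-union decomposition of $\mathcal I_\epsilon/\mathbb Z_3$ inside $S_3/\mathbb Z_3$.

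Next I would invoke additivity. Since the pieces $\mathcal I_\epsilon(v)$ lie in pairwise disjoint open neighborhoods (small neighborhoods of the distinct vertices, pulled back to $S_3/\mathbb Z_3$), the compact-support cohomology obstruction $\zeta_\epsilon$ representing the Poincar\'e dual of $[\mathcal I_\epsilon]$ splits as a sum $\zeta_\epsilon=\sum_v \zeta_\epsilon(v)$, where each $\zeta_\epsilon(v)$ is supported in an arbitrarily small neighborhood of $\mathcal I_\epsilon(v)$; this is exactly the excision statement for compactly supported cohomology of a disjoint union of open sets. Dually,
\[
[\mathcal I_\epsilon] \;=\; \sum_{v\in V(P)} [\mathcal I_\epsilon(v)] \quad\text{in } H_1(S_3/\mathbb Z_3;\mathbb F_3).
\]
Because this sum is nonzero, at least one summand $[\mathcal I_\epsilon(v)]$ is nonzero for every sufficiently small $\epsilon$.

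Finally, since the vertex set $V(P)$ is finite, the pigeonhole principle applied to a sequence $\epsilon_n\to 0$ yields a single vertex $v\in V(P)$ for which $[\mathcal I_{\epsilon_n}(v)]\neq 0$ for infinitely many $n$; restricting to this subsequence gives the claim. The main technical point, which I would be careful about, is the rigorous translation between the informal ``$1$-cycle of inscribed octahedra'' and the compact-support cohomology class $\zeta$ used to handle degenerate $\mathcal I_\epsilon$; once that framework is in place, the additivity step is simply excision, and the rest of the argument is purely combinatorial.
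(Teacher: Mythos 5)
Your proposal is correct and follows essentially the same route as the paper, which simply observes that the disjoint decomposition $\mathcal I_\epsilon=\bigsqcup_v\mathcal I_\epsilon(v)$ splits the nontrivial $1$-homology class (equivalently, the compact-support cohomology obstruction, by excision) into a sum over the vertices, so some summand must be nonzero; your added checks of $\mathbb Z_3$-invariance and the pigeonhole over a sequence $\epsilon_n\to 0$ are correct elaborations of details the paper leaves implicit.
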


Thus we have to study the situation near the vertices of $P$. More precisely, we have to consider solid angles $A(v)$ of corresponding vertices of $P$ and their smoothenings $A_\epsilon(v)$ (not depending on $\epsilon>0$ essentially because of the possibility to apply a homothety).

We want to describe the $\mathbb Z_3$-equivariant $1$-homology of the set of octahedra inscribed into $\partial A_\epsilon(v)$. The configuration space $S_3$ of all octahedra is homotopy equivalent to $SO(3)$ and it has trivial homology modulo $3$ in dimensions $1$ and $2$; the $\mathbb Z_3$-equivariant \mbox{$1$-homology} modulo $3$ (the homology of $S_3/\mathbb Z_3$) is therefore equal to $H_1(\mathbb Z_3; \mathbb F_3) = \mathbb F_3$. We choose the generator of $H_1(S_3/\mathbb Z_3; \mathbb F_3)$ to be equal to the $1$-homology of octahedra inscribed into a generic smooth convex body.

The homology class of octahedra inscribed into $\partial A_\epsilon(v)$ is well defined in the case when there are no arbitrarily large octahedra inscribed into $\partial A_\epsilon$ (note that we actually work in compact support cohomology and therefore must ensure that everything remains bounded); in this case it corresponds to the well defined $1$-homology class. Using an appropriate homothety we conclude that fixed $\epsilon$ and arbitrarily large octahedron is the same as arbitrarily small $\epsilon$ and a fixed size octahedron. By compactness considerations we conclude the following: 

\begin{lem}
The surface $\partial A_\epsilon(v)$ has no well defined homology class of inscribed octahedra only if there exists an octahedron inscribed into $\partial A(v)$ without smoothening.
\end{lem}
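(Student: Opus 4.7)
The plan is to argue the contrapositive, using compactness together with the scale invariance of the cone $A(v)$. Assume no octahedron is inscribed in $\partial A(v)$. By the discussion preceding the lemma, to show that the homology class of inscribed octahedra in $\partial A_\epsilon(v)$ is well defined it suffices to prove that the diameters of such octahedra are uniformly bounded for each fixed $\epsilon > 0$.

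Suppose for contradiction that there exist inscribed octahedra $O_n$ in $\partial A_\epsilon(v)$ with $\diam O_n = d_n \to \infty$. The cone $A(v)$ is invariant under every homothety $H_\lambda$ centered at $v$, and such a homothety sends $A_\epsilon(v)$ to $A_{\lambda\epsilon}(v)$. Applying $H_{1/d_n}$ we obtain unit-diameter octahedra $O_n'$ inscribed in $\partial A_{\epsilon_n}(v)$ with $\epsilon_n = \epsilon/d_n \to 0$, which realizes the ``fixed $\epsilon$ and large octahedron $=$ small $\epsilon$ and fixed-size octahedron'' trade-off used in the paragraph above. I next claim the $O_n'$ stay in a compact region of $S_3$: away from $v$, the surface $\partial A(v)$ is locally either a planar face or a dihedral edge of the cone, and by exactly the argument already used in the paper to rule out octahedra tending to interior points of edges of $P$ (central symmetry of the octahedron forces an invisible vertex under projection along the edge, contradicting the full visibility of a smoothened plane angle), no unit-sized octahedron can be inscribed in the corresponding pieces of $\partial A_{\epsilon_n}(v)$ once $\epsilon_n$ is small enough. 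Therefore the $O_n'$ concentrate in a bounded neighbourhood of $v$.

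Extracting a convergent subsequence $O_n' \to O^*$ in $S_3$ and using that $A_{\epsilon_n}(v) \to A(v)$ in Hausdorff metric on compact sets, the six vertices of $O^*$ must lie on $\partial A(v)$, producing an octahedron inscribed in the non-smoothened $\partial A(v)$ and contradicting our assumption. The main obstacle is the compactness step: rigorously excluding rescaled octahedra that escape along the cone to infinity. This is exactly where the previously established no-inscription-near-an-edge analysis gets reused — the geometry of $\partial A(v)$ at points far from $v$ is, locally, nothing but the planar or dihedral-angle picture already handled, so the same obstruction transplants without modification.
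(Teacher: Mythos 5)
Your argument is correct and follows exactly the route the paper indicates: the homothety trade-off (``fixed $\epsilon$ and arbitrarily large octahedron is the same as arbitrarily small $\epsilon$ and a fixed-size octahedron'') followed by a compactness extraction, with the escape-to-infinity case ruled out by the same edge-projection/visibility argument the paper uses earlier. The paper itself only sketches this in the paragraph preceding the lemma; your write-up supplies the details along the same lines, so there is nothing to add.
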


\begin{defn}
Call a solid angle $A$ \emph{special} if its boundary admits an inscribed octahedron.
\end{defn}

Denote the configuration space of all congruence classes ($SO(3)$-orbits) of solid angles (with $n\ge 3$ facets) by $\mathcal A^n$ and denote the subset of special angles by $\mathcal A^n_S$, this is a closed subset of $\mathcal A^n$. Define the function
$$
\phi : \mathcal A^n\setminus \mathcal A^n_S \to \mathbb F_3
$$
by assigning to a solid angle $A$ the $1$-homology of octahedra (divided by the generator of $H_1(S_3/\mathbb Z_3; \mathbb F_3)$) inscribed into the smoothened solid angle $A_\epsilon$.

\begin{lem}
The function $\phi$ is locally constant on $\mathcal A^n\setminus \mathcal A^n_S$.
\end{lem}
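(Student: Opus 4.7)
The plan is to deduce local constancy of $\phi$ from a parametrized version of the previous lemma, packaged as a cobordism argument in compact support cohomology.

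First I would fix $A_0 \in \mathcal A^n \setminus \mathcal A^n_S$ and, using the fact that $\mathcal A^n_S$ is closed, choose a compact neighborhood $K$ of $A_0$ in $\mathcal A^n \setminus \mathcal A^n_S$. It suffices to show that for any continuous path $A_t$, $t\in[0,1]$, contained in $K$, one has $\phi(A_0)=\phi(A_1)$. The strategy is to regard the collection
\[
\mathcal I_\epsilon(K)=\{(A,C) : A\in K,\ C \text{ is an octahedron inscribed into } \partial A_\epsilon\}
\]
as a family of $1$-cycles over $K$, and to show that its projection to $K$ is proper; a proper family of such cycles then furnishes the required cobordism between $\mathcal I_\epsilon(A_0)$ and $\mathcal I_\epsilon(A_1)$, so the homology classes agree modulo the generator of $H_1(S_3/\mathbb Z_3;\mathbb F_3)$.

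The heart of the argument is the uniform boundedness on $K$. I would argue by contradiction, mimicking the proof of the preceding lemma: if there were sequences $A^{(k)}\to A^*\in K$ and octahedra $C^{(k)}$ inscribed into $\partial (A^{(k)})_{\epsilon_k}$ with sizes diverging (or with $\epsilon_k\to 0$ after rescaling by homothety), then after rescaling so the octahedra have fixed size, the smoothening parameter goes to zero and the $C^{(k)}$ subconverge, by standard compactness, to an octahedron inscribed into $\partial A^*$. This would mean $A^*\in\mathcal A^n_S$, contradicting $K\subset \mathcal A^n\setminus \mathcal A^n_S$. Hence there is a single $R>0$ such that every octahedron inscribed into $\partial A_\epsilon$ for $A\in K$ has diameter at most $R$; the family $\mathcal I_\epsilon(K)$ is therefore contained in a compact subset of $K\times S_3$, and its projection to $K$ is proper.

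Once properness is established, the remainder is formal: the obstruction class $\zeta\in H^{9}_c(S_3/\mathbb Z_3;\mathbb F_3)$ pulls back along the continuous family $A_t$, and by the same argument as in the smooth case of~\cite{karasev2009inscribing,karasev2010equipartitionbyfans} the Poincar\'e dual of $\mathcal I_\epsilon(A_t)$ depends continuously on $t$ in compact support cohomology; since the target $\mathbb F_3$ is discrete, the integer $\phi(A_t)$ is locally constant in $t$, and so $\phi$ is locally constant on $\mathcal A^n\setminus \mathcal A^n_S$. The main obstacle is precisely the uniform boundedness step above: without the closedness of $\mathcal A^n_S$ and the compactness of the path, one could not rule out inscribed octahedra escaping to infinity as $A$ varies, and the cobordism would fail to be proper.
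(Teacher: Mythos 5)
Your proposal is correct and follows essentially the same route as the paper: fix $\epsilon$, establish that inscribed octahedra for the smoothenings stay uniformly bounded as $A$ varies away from $\mathcal A^n_S$, and then invoke continuity of the (compact support cohomology) class. The paper compresses this into one sentence, whereas you usefully spell out the uniform-boundedness step via compactness of a neighborhood and the rescaling argument already used for the preceding lemma.
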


\begin{proof} 
Fix some $\epsilon>0$. Under deformations of a solid angle $A$ that avoid arbitrarily large inscribed octahedra for its smoothening $A_\epsilon$, the smoothenings $A_\epsilon$ are deformed continuously and the sizes of their inscribed octahedra remain bounded. Hence, by the continuity of the $1$-homology (or compact support cohomology), the class of octahedra inscribed into $A_\epsilon$ is preserved.
\end{proof}

Now in the inscribing problem we have the following options: 

a) $P$ has a special angle. In this case it already has an inscribed octahedron.

b) If (the sum is over all vertices in $P$)
\begin{equation}
\label{eq:hom-sum}
\sum_{v\in P} \phi (A(v)) \neq 1
\end{equation}
then $\partial P$ admits an inscribed octahedron. This follows from additivity of the homology classes.

Note that for every solid angle $A$ close enough to a halfspace $\phi(A)=0$, since its smoothening does not admit an inscribed octahedron.

\section{Proof of Theorem~\ref{thm:simple-inscr}}

If the polytope $P$ is simple then we deal with $\mathcal A^3_S\subset \mathcal A^3$. Taking into account the observations in the previous section, we see that to prove Theorem~\ref{thm:simple-inscr} it is enough to prove the following lemma (because in this case the left hand part of \eqref{eq:hom-sum} vanishes): 

\begin{lem}
\label{lem:non-sing-conn}
The set $\mathcal A^3\setminus \mathcal A^3_S$ is arcwise connected.
\end{lem}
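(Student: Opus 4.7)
The plan is to parameterize $\mathcal{A}^3$ by the three face angles $(\alpha_1,\alpha_2,\alpha_3)$ at the apex of the trihedral angle (one could equivalently use the three dihedral angles along the edges). The admissible triples form a connected open subset of $\mathbb{R}^3$ cut out by the triangle inequalities $\alpha_i<\alpha_j+\alpha_k$ and the convexity bound $\alpha_1+\alpha_2+\alpha_3<2\pi$; the locus $\alpha_1+\alpha_2+\alpha_3\to 2\pi$ is precisely the degeneration to a halfspace. By the last observation of the previous section, a one-sided open neighborhood $\mathcal{U}$ of this locus lies entirely in $\mathcal{A}^3\setminus\mathcal{A}^3_S$ (with $\phi\equiv 0$ there), and $\mathcal{U}$ is manifestly path-connected. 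The proof therefore reduces to connecting an arbitrary $A_0\in\mathcal{A}^3\setminus\mathcal{A}^3_S$ to $\mathcal{U}$ by a path avoiding~$\mathcal{A}^3_S$.

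The natural candidate is an explicit \emph{flattening deformation}: fix $A_1\in\mathcal{U}$ and interpolate face angles linearly $A_t=(1-t)A_0+tA_1$, $t\in[0,1]$. The crucial geometric input that I would rely on is a monotonicity principle: as the face angles grow toward $\pi$ the three face planes approach a common plane, and any regular octahedron having its six vertices on these planes must shrink to zero, since its three mutually perpendicular coordinate axes must cross distinct face planes at bounded angles and a regular octahedron has a fixed shape. Made quantitative, this yields an \emph{a priori} upper bound on the diameter of any inscribed octahedron in a trihedral angle as a continuous function of the face angles, a bound tending to zero in the halfspace limit. Hence all sufficiently flat angles are non-special, confirming and extending the basepoint region~$\mathcal{U}$.

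If the linear interpolation $A_t$ already stays in $\mathcal{A}^3\setminus\mathcal{A}^3_S$, we are done. Otherwise, I would perturb the path using the three-dimensional freedom inside $\mathcal{A}^3$. The incidence variety $\{(A,C):C\subset\partial A\text{ is an inscribed regular octahedron}\}$ has expected dimension $3+7-6=4$ with generic one-dimensional fibers over $\mathcal{A}^3$, so its projection image $\mathcal{A}^3_S$ is semi-algebraic, and the stratum $\partial\mathcal{A}^3_S$ where octahedra appear or disappear is of codimension at least one. A generic small perturbation of $A_t$ meets $\partial\mathcal{A}^3_S$ only transversally, letting one locally route around a codimension-one piece; inside an open three-dimensional component of $\mathcal{A}^3_S$ one invokes the monotonicity above to exit the component in the flattening direction until reaching $\mathcal{U}$.

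The main obstacle is making the monotonicity rigorous and, in particular, ruling out ``trapped'' components of $\mathcal{A}^3\setminus\mathcal{A}^3_S$ that would be enclosed by $\mathcal{A}^3_S$ away from $\mathcal{U}$. My plan is to prove the diameter bound directly from the constraint that the three coordinate axes of a regular octahedron must reach the three (nearly coplanar) face planes, then use this to show that every non-special component intersects the near-halfspace region $\mathcal{U}$. This is the step I expect to consume most of the work; once established, together with the perturbation argument it gives arcwise connectedness of $\mathcal{A}^3\setminus\mathcal{A}^3_S$ and completes the proof of Lemma~\ref{lem:non-sing-conn}.
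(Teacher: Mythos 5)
There is a genuine gap, and you have in fact named it yourself: the ``trapped components'' problem is not a technical loose end but the entire content of the lemma, and your proposal contains no workable mechanism for resolving it. The difficulty is that $\mathcal A^3_S$ is a \emph{full-dimensional} closed subset of the three-dimensional space $\mathcal A^3$ --- your own incidence count ($3+7-6=4$ with one-dimensional fibers) predicts a three-dimensional image, and Corollary~\ref{cor:small angle} confirms it: every angle with all facet angles below $\pi/6$ is special, so $\mathcal A^3_S$ has nonempty interior. Consequently the transversality/perturbation step collapses: one cannot ``route around'' a full-dimensional obstacle by a generic small perturbation, and the fallback of ``exiting the component in the flattening direction'' from \emph{inside} an open piece of $\mathcal A^3_S$ is not available, since the moment the path enters $\mathcal A^3_S$ it has ceased to be a path in the complement. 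The proposed monotonicity (flattening forces inscribed octahedra to shrink) only re-establishes that near-halfspace angles are non-special --- which the paper already notes at the end of Section~2 --- and gives no information about whether an arbitrary non-special component can reach that region without crossing the special set.

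What actually closes the gap in the paper is a complete, explicit characterization of $\mathcal A^3_S$ (Lemma~\ref{lem:circumscirbed angle}, refined in Lemma~\ref{lem:circumscribed angle tight}): a trihedral angle is special if and only if its spherical triangle fits into the regular triangle $T_0$ of side $\pi/3$ in a prescribed normalized position. With this in hand, the paper does not perturb a path into general position; it \emph{constructs} one, monotonically enlarging the sides of the spherical triangle while verifying at each stage --- via the fitting criterion --- that the triangle still fails to embed properly into $T_0$, until a side exceeds $\pi/3$, at which point Corollary~\ref{cor:small angle} places the angle in a manifestly connected non-special region. Without some such classification of which angles admit an inscribed octahedron, soft arguments of the kind you propose cannot distinguish the true picture from one in which $\mathcal A^3\setminus\mathcal A^3_S$ has an island completely surrounded by special angles; so the characterization lemma is the missing idea, not an optional refinement.
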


The proof of Lemma~\ref{lem:non-sing-conn} will follow from the description of all solid angles that admit an octahedron inscribed into the boundary:

\begin{lem}
\label{lem:circumscirbed angle}
A solid angle $A$ admits an octahedron inscribed into its boundary if and only if it is possible to place its corresponding spherical triangle $v_1v_2v_3$ inside of the regular spherical triangle $t_1t_2t_3$ with $|t_1t_2|=\pi/3$ in the following way (up to relabeling the vertices $v_1$, $v_2$, $v_3$): The vertices $v_1$ and $t_1$ coincide, the vertex $v_2$ lies on the segment $t_1t_2$, and $v_3$ lies inside the triangle $t_1v_2t_3$. 
\end{lem}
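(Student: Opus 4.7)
The plan has three phases: a combinatorial reduction, an explicit extremal construction, and a deformation argument.

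\smallskip\noindent\emph{Phase 1 (combinatorial reduction).} I would first classify how the six vertices of the octahedron can sit on the three half-planes comprising $\partial A$. A face of $A$ (a $2$-dimensional cone from the apex $P$) that contains two antipodal octahedron vertices must also contain their midpoint, the octahedron's centre, which pins $P$ to a lower-dimensional locus that can be treated separately. Apart from such degenerate cases I would argue that each face of $A$ contains exactly two octahedron vertices joined by an edge of the octahedron, so the three chosen edges form a perfect matching of the six vertices. The two endpoint-triples of this matching are opposite faces $F = \{w_1,w_2,w_3\}$ and $F' = \{w_1',w_2',w_3'\}$ of the octahedron; I take $F$ to be the ``outer'' triple whose vertices will lie on the three edges of $A$ in the extremal position considered in Phase 2.

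\smallskip\noindent\emph{Phase 2 (extremal case).} Suppose each $w_i$ actually lies on an edge of $A$ rather than in the interior of a face. Then the rays $\overrightarrow{Pw_i}$ coincide with the three edges of $A$, so the sides of $v_1 v_2 v_3$ are exactly the pairwise angles at $P$ in the tetrahedron $Pw_1w_2w_3$. A short coordinate calculation shows that the condition $v_1 v_2 v_3 = t_1 t_2 t_3$ (i.e.\ all sides equal to $\pi/3$) is equivalent to this tetrahedron being regular; moreover in that situation the opposite face $F'$ automatically lies in the interiors of the three faces of $A$, one vertex per face, giving a genuine inscribed octahedron at the boundary of the fitting region.

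\smallskip\noindent\emph{Phase 3 (deformation).} For a general $v_1 v_2 v_3$ fitting as described I would deform the extremal configuration by sliding each $w_i$ off its solid-angle edge into the interior of a face, producing a $3$-parameter family of inscriptions mapping onto a $3$-dimensional region of $\mathcal{A}^3$. The main difficulty is to check that the image of this map is precisely the region described in the lemma, in particular the asymmetric sub-triangle condition $v_3 \in \triangle t_1 v_2 t_3$ rather than merely $v_3 \in \triangle t_1 t_2 t_3$. I expect this to come from positivity of the barycentric coordinates of the ``inner'' vertices $w_i'$ with respect to the three edges of $A$: one such coordinate vanishes precisely along the arc $v_2 t_3$, ruling out the opposite half of $\triangle t_1 t_2 t_3$ as producing a valid inscription.
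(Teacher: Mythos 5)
Your Phase 1 reduction is where the argument breaks. The antipodal-pair observation only rules out a facet of $A$ containing two \emph{opposite} vertices of the octahedron; it does not force the distribution $2+2+2$. A facet of $A$ can contain an entire triangular face $a'b'c'$ of the octahedron (three pairwise non-antipodal vertices), with the second facet containing an edge $ab$ and the third a single vertex $c$. This $3+2+1$ configuration is generic and stable, not a degenerate limit, and in the paper it is precisely the case that produces the asymmetric placement in the statement: the direction of the common edge of the first two facets is forced to equal $\overrightarrow{ba}$, giving $v_1=t_1$ exactly, with $v_2$ on the arc $t_1t_2$ and $v_3$ confined to $\triangle t_1v_2t_3$ by support-plane (``betweenness'') arguments. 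By discarding this case you lose the very configurations that explain why the fitting condition has the specific form $v_3\in\triangle t_1v_2t_3$ rather than a symmetric one.

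The second gap is that Phase 3 defers exactly the hard content. Even granting the $2+2+2$ case, knowing that the three matched edges lie on the three facets only gives you (as the paper shows) that the extensions of the sides of $\triangle v_1v_2v_3$ pass through the vertices of $T_0$, hence $\triangle v_1v_2v_3\subset T_0$; one still has to prove that any such triangle can be \emph{re}positioned into the special position of the lemma. The paper does this by an explicit spherical-geometry argument (an angle of $\triangle v_1v_2v_3$ is at most $\angle t_2t_1t_3$ by area comparison, then one constructs auxiliary points $t_1'$, $t_3'$ and verifies the required inclusions). Your proposal replaces both this and the converse direction by an unproved claim that the image of a $3$-parameter deformation family is ``precisely'' the region in the lemma; identifying that image is the whole lemma, so nothing has been reduced. (A minor further point: with the $w_i$ on the edges of $A$, only the directions $\overrightarrow{Pw_i}$ matter, so ``all sides equal to $\pi/3$'' is not equivalent to the tetrahedron $Pw_1w_2w_3$ being regular, only to the three angles at $P$ being $\pi/3$.)
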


\begin{center}
\includegraphics{figoct-3.mps}

\r\label{fig:triangle in triangle (3+2+1)}.
\end{center}

\begin{proof}

Let us check how an octahedron $C$ could be inscribed into $A$. There are two alternatives: 

Case~1: Some three vertices of $C$ are on one facet of $A$, two are on the other facet, and one is on the third facet.  

Case~2: Every facet of $A$ contains two vertices of $C$.

There are other degenerate cases, but we may ignore them since they all are limit cases of these two. 

Denote the vertices of the octahedron by $a$, $b$, $c$, $a'$, $b'$, and $c'$ (see Figure~\ref{fig:octahdron}).

\begin{center}
\includegraphics{figoct-1.mps}

\r\label{fig:octahdron}.
\end{center}

Consider the first case. Suppose one of the facets of the angle $A$ contains the facet~$a'b'c'$, the second facet contains the edge $ab$, and the third facet contains the vertex $c$.

Denote the vertex of the angle $A$ by $v$. It is easy to see that the common edge of the first and the second facets of $A$ is parallel to the edge $ab$ of $C$, because both facets are parallel to $ab$. Without loss of generality we may assume that the point $b$ is closer to $v$ than $a$. Let $v_1$ be the vertex of the spherical triangle that corresponds to the common edge of the first and the second facet of $A$, $v_2$ correspond to the first and the third facet, and $v_3$ is the remaining vertex (see Figure~\ref{fig:situation in 3d (3+2+1)}).

Let $t_1$, $t_2$, and $t_3$ be the vertices of the regular spherical triangle that corresponds to the vectors $\overrightarrow{ba}$, $\overrightarrow{c'b'}$, and~$\overrightarrow{a'c}$. 

\begin{center}
\includegraphics{figoct-2.mps}

\r\label{fig:situation in 3d (3+2+1)}.
\end{center}

Denote by $v'$ the point where the edge $v_3$ and the plane $abc$ intersect (Figure~\ref{fig:situation in 3d (3+2+1)}). Obviously, $v'c$ is parallel to the edge $v_2$.
Since $v'c$ does not intersect the interior of the triangle $abc$, it follows that the vector $\overrightarrow{v'c}$ lies ``between'' the vectors $\overrightarrow{ba}$ and $\overrightarrow{bc}$. Therefore on the sphere the vertex $v_2$ lies on the segment $[t_1,t_2]$.

Consider the plane $\alpha$ of the third facet that contains the point $c$ of the octahedron.
It contains the line~$cv'$ and does not intersect the octahedron in the interior.
This means that $\alpha$ lies ``between'' the planes (in terms of normals to oriented planes) $v'ca'$ and $v'ca$, the latter coinciding with the plane $abc$. The line $v_2t_3$ (on the sphere) corresponds to the plane $v'ca'$ and the line $t_1v_2$ corresponds to the plane $v'ca$.
Therefore the third facet corresponds to a line ``between'' $v_2t_1$ and $v_2t_3$.

It is clear that the second facet corresponds to the line passing through $t_1$ that lies ``between'' $[t_1,t_2)$ and $[t_1,t_3)$. Therefore the point $v_3$ lies inside the triangle $t_1v_2t_3$.

To prove the lemma in the opposite direction, we note that for any triangle from the statement of the lemma it is possible to construct an inscribed octahedron by the way depicted in Figure~\ref{fig:situation in 3d (3+2+1)}.

Consider the second case. Without loss of generality we may assume that the first facet of $A$ contains the edge $ab$ of the octahedron, the second facet contains the edge $ca'$, and the third facet contains the edge $b'c'$. This means that extensions of the sides of the spherical triangle $\triangle v_1v_2v_3$ pass through the vertices of $\triangle t_1t_2t_3$ (see Figure~\ref{fig:triangle in triangle (2+2+2)}). 
If the fact that $\triangle v_1v_2v_3$ is inside $\triangle t_1t_2t_3$ is not clear to the reader, please see the proof of Lemma~\ref{lem:poly-inclusion} for a rigorous explanation.

\begin{center}
\raisebox{-0.31cm}{\parbox{5cm}{
\begin{center}
\includegraphics{figoct-4.mps}

\r\label{fig:triangle in triangle (2+2+2)}.
\end{center}
}}
\parbox{6cm}{\begin{center}
\includegraphics{figoct-5.mps}

\r\label{fig:triangle in triangle (2+2+2) with another triangle}.
\end{center}}
\end{center}

Let us show that $\triangle v_1v_2v_3$ can be placed in $\triangle t_1t_2t_3$ in the proper way. Since the area of $\triangle v_1v_2v_3$ is less than the area of $\triangle t_1t_2t_3$ it follows that one of the angles of $\triangle v_1v_2v_3$ is less than $\angle t_2t_1t_3$ (note that $\triangle t_1t_2t_3$ is regular). Without loss of generality we may assume that this angle is $\angle v_2$ and the triangle $v_1v_2v_3$ is placed in $t_1t_2t_3$ in the way shown in Figure~\ref{fig:triangle in triangle (2+2+2)} (the points $t_1$ and $v_3$ are on the same side of the line $v_1v_2$).

Choose a point $t'_1$ so that $|t_1'v_2|=|t_1t_2|$ and $\angle t'_1v_2t_3=\angle t_1t_2t_3$; and choose a point~$t'_3$ on the ray $[v_2, t_3)$ so that $|t'_3v_2|=|t_1t_2|$ (Figure~\ref{fig:triangle in triangle (2+2+2) with another triangle}). Note that $\angle t_3v_2t_1>\angle t_3t_2t_1$. Therefore the segment $[t'_1, v_2]$ intersects the segment $[t_1, v_3]$, thus giving the inclusion $\triangle v_1v_2v_3 \subset \triangle t'_1v_1v_2$. We obtain that $\triangle v_1v_2v_3$ is positioned in the proper way inside $\triangle v_2t'_1t'_3$, which is congruent to $\triangle t_1t_2t_3$.
\end{proof}

\begin{lem}
\label{lem:circumscribed angle tight}
In Lemma~\ref{lem:circumscirbed angle} we may assume that $|v_1v_2|\ge |v_1v_3|\ge |v_2v_3|$.
\end{lem}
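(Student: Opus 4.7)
The plan is to invoke the freedom to relabel the vertices in Lemma~\ref{lem:circumscirbed angle}. First I would note that $v_1v_2v_3\subset t_1t_2t_3$, a regular spherical triangle of side~$\pi/3$, so every side of $v_1v_2v_3$ has length at most~$\pi/3$. Now relabel the vertices of the solid angle's spherical triangle so that $|v_1v_2|$ is its longest side, $|v_1v_3|$ the middle one, and $|v_2v_3|$ the shortest; this is precisely the inequality claimed. It only remains to check that this particular labeling is still compatible with the placement prescribed by Lemma~\ref{lem:circumscirbed angle}.

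With the new labeling, put $v_1$ at $t_1$ and $v_2$ on the edge $t_1t_2$ at distance $|v_1v_2|\le\pi/3$ from $t_1$, and place $v_3$ on the $t_3$-side of $t_1t_2$ at the point determined by its distances to $v_1$ and~$v_2$. Two things must be verified: (i) $v_3$ lies inside the big triangle $t_1t_2t_3$, and (ii) $v_3$ lies in the sub-triangle $t_1v_2t_3$. Condition~(i) is equivalent to $\angle v_2v_1v_3\le\arccos(1/3)=\angle t_2t_1t_3$. Since $v_1$ is opposite the shortest side, it carries the smallest angle of $v_1v_2v_3$; by the spherical Gauss--Bonnet formula the three angles sum to $\pi+\mathrm{area}(v_1v_2v_3)\le\pi+(3\arccos(1/3)-\pi)=3\arccos(1/3)$, so the smallest of them is at most $\arccos(1/3)$, as needed.

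The main obstacle is condition~(ii): showing that $v_3$ lies on the $t_1$-side of the chord $v_2t_3$. The approach I would pursue is a continuity argument. Start with the placement guaranteed by Lemma~\ref{lem:circumscirbed angle} for whatever labeling that lemma provides; continuously rotate and re-isometry the spherical triangle in $T$ so that its longest side slides onto $t_1t_2$ with $v_1$ at $t_1$, keeping $v_1v_2v_3\subset T$ throughout. During this deformation $v_3$ cannot cross the chord $v_2t_3$ without producing a boundary configuration that can be excluded, so the sub-triangle inclusion persists. The delicate point is controlling the angle $\angle t_1v_2t_3$, which varies with $|v_1v_2|$, against the angle $\angle v_1v_2v_3$ throughout the deformation; verifying this inequality rigorously — perhaps by splitting into Case~1 and Case~2 of the proof of Lemma~\ref{lem:circumscirbed angle} separately — is where the bulk of the work will lie.
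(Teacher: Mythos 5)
Your reduction correctly isolates the real content of the lemma, but the step you defer --- showing that $v_3$ stays on the $t_1$-side of the chord $v_2t_3$ after re-sorting the labels --- \emph{is} the lemma, and the continuity argument you sketch does not close the gap. The invariant you propose to maintain during the deformation is containment $\triangle v_1v_2v_3\subset\triangle t_1t_2t_3$, but that is strictly weaker than the ``proper placement'' of Lemma~\ref{lem:circumscirbed angle}: the Remark following Lemma~\ref{lem:poly-inclusion} exhibits triangles that fit inside $T_0$ yet admit no proper placement at all (shrink one side of $T_0$ slightly toward its midpoint). So nothing prevents $v_3$ from crossing the chord $v_2t_3$ while the triangle stays inside $T_0$; ``the boundary configuration can be excluded'' is exactly the assertion to be proved, and you give no mechanism for excluding it. A smaller inaccuracy: your condition~(i) is not \emph{equivalent} to $\angle v_2v_1v_3\le\arccos(1/3)$, because the altitude of $T_0$ from $t_1$ is shorter than $\pi/3$, so a point at distance $\le\pi/3$ from $t_1$ inside the angle at $t_1$ can still lie beyond the opposite side $t_2t_3$. (The Gauss--Bonnet bound on the smallest angle is a correct necessary condition --- the paper uses the same area comparison in Case~2 of Lemma~\ref{lem:circumscirbed angle} --- but it is not what is needed here.)

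The paper avoids any deformation and instead performs two explicit reflections, checking properness directly each time: if $|v_1v_3|>|v_1v_2|$, reflect $\triangle v_1v_2v_3$ in the internal bisector of $\angle v_2v_1v_3$, which exchanges the two sides at $v_1$ and visibly preserves the proper placement; if $|v_2v_3|>|v_1v_3|$, reflect in the perpendicular bisector of $[v_1,v_2]$ and verify by two elementary angle comparisons ($\angle v_3'v_2v_1<\angle t_3v_2v_1$ and $\angle v_3'v_1v_2<\angle t_3v_1v_2$) that the image $v_3'$ lands inside $\triangle t_1v_2t_3$; iterating these two operations sorts the side lengths. To complete your write-up you should replace the continuity argument by such concrete operations and supply the corresponding angle inequalities.
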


\begin{proof}[Proof of Lemma \ref{lem:circumscribed angle tight}]
If $|v_1v_3|>|v_1v_2|$ then reflect $\triangle v_1v_2v_3$ with respect to the bisector of $\angle v_2v_1v_3$ and the triangle $v_1v_2'v_3'$, which lies in $\triangle t_1t_2t_3$ in a proper way, because $v_2'\in [v_1,v_3]$ and $\triangle v_1v_2t_3\subset v_1v_3't_3$.

If $|v_2v_3|>|v_1v_3|$ then reflect $\triangle v_1v_2v_3$ with respect to the perpendicular bisector of the segment $[v_1,v_2]$. Denote by $v_3'$ the image of $v_3$. We have 
\[\angle v_3'v_2v_1=\angle v_3v_1v_2<\angle t_3v_1v_2<\angle t_3v_2v_1.\]
Since  $|v_2v_3|>|v_1v_3|$, we have 
\[\angle v_3'v_1v_2=\angle v_3v_2v_1<\angle v_3v_1v_2<\angle t_3v_1v_2.\]
Therefore the ``rays'' $[v_1, v_3')$ and $[v_2, v_3')$ are directed into the interior of $\triangle t_1v_2t_3$ and the point $v_3'$ lies inside this triangle.

Using this two kinds of operations we can rearrange the side lengths of the $\triangle v_1v_2v_3$ in the required order.
\end{proof}

\begin{cor}
\label{cor:small angle}
If all facet angles of a solid angle $A\in \mathcal A^3$ are less than $\pi/6$ then $A\in \mathcal A^3_S$. If one facet angle of a solid angle $A\in \mathcal A^3$ is greater than $\pi/3$ then $A\in \mathcal A^3\setminus \mathcal A^3_S$.
\end{cor}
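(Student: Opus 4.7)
The plan is to decode both halves of the corollary through Lemma~\ref{lem:circumscirbed angle}, exploiting the fact that the facet angles of $A$ are exactly the side lengths of its associated spherical triangle $\triangle v_1 v_2 v_3$. For the second assertion, the argument is routine: the regular spherical triangle $\triangle t_1 t_2 t_3$ is convex, so its diameter is attained at a pair of vertices and equals $\pi/3$; thus if some side of $\triangle v_1 v_2 v_3$ exceeds $\pi/3$, no isometric placement into $\triangle t_1 t_2 t_3$ is possible, and Lemma~\ref{lem:circumscirbed angle} gives $A \in \mathcal A^3 \setminus \mathcal A^3_S$.

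For the first assertion, I would assume all facet angles are strictly less than $\pi/6$ and reorder via Lemma~\ref{lem:circumscribed angle tight} so that $|v_1 v_2| \ge |v_1 v_3| \ge |v_2 v_3|$. Place $v_1 = t_1$, put $v_2$ on $[t_1, t_2]$ at distance $|v_1 v_2| < \pi/6$ from $t_1$, and position $v_3$ on the $t_3$-side of $[t_1, v_2]$ with the correct distances. The remaining task is to show $v_3 \in \triangle t_1 v_2 t_3$, which reduces to two angular inequalities: (i) $\angle v_2 v_1 v_3 < \arccos(1/3)$, the angle of the regular triangle at $t_1$; and (ii) $\angle v_1 v_2 v_3 < \angle t_1 v_2 t_3$. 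For (i), $\angle v_2 v_1 v_3$ is the smallest angle of $\triangle v_1 v_2 v_3$ (opposite the shortest side), and a spherical-law-of-cosines optimization over triangles with all sides at most $\pi/6$ shows the maximum is $\arccos(2\sqrt 3 - 3) \approx 62.3^\circ$, comfortably below $\arccos(1/3) \approx 70.5^\circ$.

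For (ii), the ordering $|v_1 v_3| \le |v_1 v_2|$ combined with the spherical law of cosines at $v_2$ forces $\cos \angle v_1 v_2 v_3 > 0$, hence $\angle v_1 v_2 v_3 < \pi/2$; on the other hand, a direct computation inside $\triangle t_1 v_2 t_3$ shows that $\cos \angle t_1 v_2 t_3$ is proportional to $\tfrac12 \sin |t_1 v_2| - \tfrac{\sqrt 3}{6} \cos |t_1 v_2|$, which vanishes exactly at $|t_1 v_2| = \pi/6$ and is negative for smaller values, giving $\angle t_1 v_2 t_3 > \pi/2$. I expect (ii) to be the main obstacle: the threshold $\pi/6$ in the hypothesis is tight, and the argument relies on the numerical coincidence that $|t_1 v_2| = \pi/6$ makes $\angle t_1 v_2 t_3$ equal to $\pi/2$, exactly matching the upper bound on $\angle v_1 v_2 v_3$ coming from the side ordering.
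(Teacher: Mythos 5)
Your argument is correct and is exactly the intended derivation: the paper states this corollary without proof as an immediate consequence of Lemmas~\ref{lem:circumscirbed angle} and~\ref{lem:circumscribed angle tight}, and you supply the verification via the identification of facet angles with the side lengths of the spherical triangle. Both of your key numerical claims check out --- the smallest angle of a triangle with sides at most $\pi/6$ is at most $\arccos(2\sqrt{3}-3)<\arccos(1/3)$ (e.g.\ by bounding it by the mean angle $(\pi+\mathrm{area})/3$), and $\angle t_1v_2t_3>\pi/2$ precisely when $|t_1v_2|<\pi/6$ since the foot of the perpendicular from $t_3$ is the midpoint of $t_1t_2$ --- so the placement required by Lemma~\ref{lem:circumscirbed angle} indeed exists.
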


\begin{rem}
It is \emph{not true} that if all facet angles of a solid angle $A\in \mathcal A^3$ are at most  $\pi/3$ then $A\in \mathcal A^3_S$. Indeed, we start from the spherical triangle $t_1t_2t_3$ and decrease slightly its side $t_2t_3$. The resulting triangle will not fit into $\triangle t_1t_2t_3$, so by Lemma~\ref{lem:circumscirbed angle} it cannot be in $\mathcal A^3_S$.
\end{rem}

Now we make the final step:

\begin{proof}[Proof of Lemma~\ref{lem:non-sing-conn}]
Consider the triangle $T$ corresponding to a solid angle $A\in \mathcal A^3\setminus \mathcal A^3_S$. Let $T=\triangle v_1v_2v_3$ and $|v_1v_2|\ge |v_1v_3|\ge |v_2v_3|$. Let $T_0$ be the regular triangle $t_1t_2t_3$ with side length $\pi/3$.

We are going to show how to increase the sides of the triangle $T$ and obtain a triangle with a side greater than $\pi/3$ (the set of triangles of this kind is obviously arcwise connected and by Corollary~\ref{cor:small angle} belongs to the set $\mathcal A^3\setminus \mathcal A^3_S$).

Suppose all sides are less than $\pi/3$. Let us try to place the triangle $T$ into the regular triangle $T_0$ in the way prescribed by Lemma~\ref{lem:circumscribed angle tight}. The only way that makes the position not proper is that $v_3$ is outside $\triangle v_1v_2t_3$, which is possible only if the segment $v_1v_3$ goes outside the segment $v_2t_3$. From Corollary~\ref{cor:small angle} it follows that $|v_1v_2| \ge \pi/6$ and therefore $\angle v_1v_2t_3$ is acute. Now we increase the length of the side $v_1v_3$ up to $|v_1v_2|$ preserving the angle $\angle v_2v_1v_3$. The position of $\triangle v_1v_2v_3$ remains not proper (as required by Lemma~\ref{lem:circumscribed angle tight}). 


Now we start to increase the length of the sides $v_1v_2$ and $v_1v_3$ in such a way that they remain equal during the process. The angle $\angle v_1v_2v_3$ will increase while the angle $\angle v_1v_2t_3$ will decrease during this process. Hence the point $v_3$ will remain outside $\triangle v_1v_2t_3$. Finally we obtain an isosceles triangle with two sides greater than $\pi/3$.

\begin{center}
\includegraphics{figoct-7.mps}\hskip 0.3cm
\includegraphics{figoct-8.mps}\hskip 0.3cm
\includegraphics{figoct-9.mps}\hskip 0.3cm
\includegraphics{figoct-10.mps}

\r
\end{center}
\end{proof}

\section{The case of non-simple polytopes}
\label{sec:non-simple-sec}

In this case we have a weaker analogue of Lemma~\ref{lem:circumscirbed angle}. We again associate a solid angle~$A$ with its spheric convex polygon and denote by $T_0$ the regular spherical triangle with side length $\pi/3$.

\begin{lem}
\label{lem:poly-inclusion}
If $A\in \mathcal \mathcal A_S$ then some congruence takes the spherical polygon of $A$ inside~$T_0$.
\end{lem}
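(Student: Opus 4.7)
I propose to extend the argument of Lemma~\ref{lem:circumscirbed angle} to solid angles with more than three facets via a reduction to the 3-facet case. Given an inscribed regular octahedron $C$ in $\partial A$, any facet of $A$ containing no vertex of $C$ can be removed, yielding a larger solid angle $A' \supseteq A$ with $C$ still inscribed on $\partial A'$. The spherical polygon of $A'$ contains that of $A$, so it suffices to prove the lemma for $A'$. This reduces to the case where every facet of $A$ contains at least one vertex of $C$, and hence $A$ has at most 6 facets.

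Next, as in Lemma~\ref{lem:circumscirbed angle}, each facet is a support plane of $C$ containing 1, 2, or 3 vertices, where 3 implies a face and 2 implies an edge of $C$. If the number of facets is 3, Lemma~\ref{lem:circumscirbed angle} applies directly, giving the desired inclusion. Otherwise, I will iteratively reduce the facet count by merging pairs of adjacent single-vertex facets $F_i \ni p$ and $F_j \ni q$ (with $p, q$ connected by an edge of $C$) into a single edge-containing facet $F_{ij}$. The merged facet $F_{ij}$ is chosen from the 1-parameter pencil of support planes of $C$ through $v$ containing the edge $pq$, specifically so that the half-space of $F_{ij}$ contains the wedge $H_i \cap H_j$. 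Iterating reduces to a 3-facet solid angle $\tilde A \supseteq A$ with $C$ inscribed, to which Lemma~\ref{lem:circumscirbed angle} applies.

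By Lemma~\ref{lem:circumscirbed angle}, the spherical polygon of $\tilde A$ fits inside some congruent copy of $T_0$, and since the polygon of $A$ is contained in that of $\tilde A$, the polygon of $A$ also fits in $T_0$, proving the lemma. The case distribution $(2,2,2)$ in particular provides the rigorous geometric inclusion that was deferred from the proof of Case 2 of Lemma~\ref{lem:circumscirbed angle}: the triangle $v_1v_2v_3$ lies inside $t_1t_2t_3$ because each of its sides extends along the great circle of a facet whose corresponding edge of $C$ gives a vertex $t_i$ of $T_0$, and the configuration is then pinned down by the area comparison already invoked in that proof.

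The main obstacle lies in verifying that the merging operation always yields a valid enlargement $\tilde A \supseteq A$ preserving the inscribed structure. The chosen $F_{ij}$ must simultaneously support $C$ along edge $pq$ and be positioned so that its half-space contains $H_i \cap H_j$, which is a nontrivial geometric condition on the pencil parameter and on the ridge line $F_i \cap F_j$. In degenerate configurations, such as the $(2,2,1,1)$ distribution with antipodal singleton vertices where no edge of $C$ directly connects them, the straightforward merging is unavailable and a more delicate argument is needed, likely exploiting the regular octahedron's symmetries and handling these exceptional configurations separately via direct inspection.
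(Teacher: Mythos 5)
Your proposal takes a different route from the paper and has a genuine gap at its central step. The merging operation is not well defined: a facet of a solid angle with apex $v$ that contains both vertices $p$ and $q$ of $C$ must lie in the plane $\aff(v,p,q)$, which is uniquely determined --- there is no one-parameter pencil left to optimize over once you require the plane to pass through $v$ and contain the edge $pq$. For this single forced plane the required containment generically fails: its half-space contains the wedge $H_i\cap H_j$ only if it contains the ridge line $F_i\cap F_j$, and in a typical configuration the plane $\aff(v,p,q)$ cuts through the interior of $A$, so $\tilde A\supseteq A$ is lost. The combinatorics is also uncontrolled: after discarding vertex-free facets you can still face distributions such as six singleton facets or $(2,2,1,1)$ with the singletons at antipodal vertices of $C$ (no connecting edge), and these are not rare degenerations to be ``handled separately by inspection'' --- they are the typical situation for non-simple angles, and no argument is offered for them. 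Finally, there is a circularity: the paper explicitly defers the rigorous proof of the Case~2 containment in Lemma~\ref{lem:circumscirbed angle} \emph{to} the proof of Lemma~\ref{lem:poly-inclusion}, while you propose to prove Lemma~\ref{lem:poly-inclusion} by reducing to Lemma~\ref{lem:circumscirbed angle}, justifying its Case~2 containment only by the remark that the side extensions pass through the $t_i$ and by an area comparison --- which is exactly the assertion that needs proof, not a proof of it.

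For comparison, the paper's argument avoids any reduction to the three-facet case. It takes the regular tetrahedron $\Theta$ spanned by four alternating facets of $C$; the solid angle $B$ of $\Theta$ at a suitable vertex has spherical triangle congruent to $T_0$. Looking at $\Theta$ from the apex $v$ of $A$, one of three cases occurs; in the main case one shows (after a small perturbation making $\partial A\cap\partial B$ transversal) that each intersection $A\cap B_i$ with a flat facet $B_i$ of $B$ is bounded, because a support line at the middle vertex of the inscribed parallelogram $sab'c$ separates it from infinity. Boundedness of $\partial A\cap\partial B$ then forces the whole cone $A$ inside $B$ after translating apices, which is precisely the statement that the spherical polygon of $A$ fits inside $T_0$. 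The remaining two cases are disposed of by passing to the antipodal tetrahedron or by showing a vertex of $C$ would be invisible from $v$. You would need an argument of this global kind (or a correct enlargement construction) to close the gap.
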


\begin{proof}
The assumption $A\in\mathcal A_S$ means that the vertex $v$ of $A$ is outside $C$ and we ``see'' all vertices of $C$ from $v$. Here we \emph{see} a vertex $c\in C$ if the segment $[vc]$ does not intersect the interior of $C$. We also note that it is sufficient to prove the lemma in the ``generic'' case and then go to the limit using compactness of the group of rotations. So we are free to perturb anything.

Consider a regular tetrahedron $\Theta$ formed by some four alternating facets of $C$. Observe that $v$ is not in the interior of $\Theta$, otherwise some vertex of $C$ would be ``behind'' $C$ when looking from $v$. Look at $\Theta$ from $v$, there are three alternatives:

Case~I: We see some vertex of $\Theta$ and this vertex is not ``behind'' $\Theta$. Denote by $B$ the solid angle of this vertex, its spherical triangle is congruent to $T_0$. Let us make a small perturbation of $A$ keeping $C$ inscribed into $A$ and making the intersection $\partial A \cap \partial B$ transversal.

Let $\partial B$ consist of three flat angles $B_1$, $B_2$, and $B_3$. Every intersection $X_i = A\cap B_i$ is a convex set containing the vertex $s$ of $B_i$ and having three vertices $x_1$, $x_2$, and $x_3$ of the octahedron $C$ on its boundary. Consider the facet $B_1$, in this case $x_1$, $x_2$, and $x_3$ is the triple of verticies $a$, $b'$, and $c$ (Figure~\ref{fig:circumscriped polygon}).

Note that $a$ and $c$ are on the sides of $B_1$ and the segments $[a,b']$ and $[b',c]$ are parallel to the sides of $B_1$, so $sab'c$ is a parallelogram. There exists a support line $\ell_1$ to $X_1$ passing through $b'$ in the plane of $B_1$. The points $a$, $c$, and $s$ are on the one side of $\ell_1$ and the line~$\ell_1$ separates $X_1$ from infinity, except for the case when $\ell_1$ is parallel to a side of~$B_1$. But the latter situation is degenerate and can be excluded by a small perturbation. A similar statement holds for facets $B_2$ and $B_3$.

Thus $\partial A\cap \partial B$ is bounded and after the translation that identifies the vertices of $A$ and $B$ the whole solid angle $A$ will get inside $B$. This is true after an arbitrarily small perturbation of $A$, so it was true for the original $A$ by the continuity.

\begin{center}
\includegraphics{figoct-6.mps}

\r\label{fig:circumscriped polygon}.
\end{center}

Case~II: We see some vertex of $\Theta$ and this vertex is ``behind'' $\Theta$. In this case we replace $\Theta$ with another tetrahedron $\Theta'$ symmetric to $\Theta$ with respect to the center of $C$. The considered case cannot happen to both $\Theta$ and $\Theta'$ at the same time.

Case~III: We see $\Theta$ as a quadrangle from $v$. In this case we do not see the vertex of $C$ that corresponds to the farthest in the pair of edges of $\Theta$ that intersect as we see them from $v$, because we see every face of $C$ adjacent to $v$ from inside.
\end{proof}

Now we make a definition:

\begin{defn}
Denote by $\mathcal A_0\subset \mathcal A$ the set of solid angles that cannot be put into $T_0$ by a~congruence.
\end{defn}

\begin{rem}
A careful analysis of Lemmas~\ref{lem:circumscirbed angle} and~\ref{lem:circumscribed angle tight} shows that $\mathcal A^3\cap \mathcal A_0\neq \mathcal A^3\setminus \mathcal A^3_S$. It~is sufficient to take $T=T_0$ and shrink one of its sides to the midpoint of that side slightly.
\end{rem}

\begin{thm}
\label{thm:non-thm:simple-inscr}
Suppose $P$ is a convex polytope in $\mathbb R^3$ such that all its solid angles either in $\mathcal A^3$ or in $\mathcal A_0$. Then there exists a regular octahedron inscribed into $\partial P$.
\end{thm}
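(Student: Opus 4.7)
The plan is to run the same program as in Theorem~\ref{thm:simple-inscr}, invoking the dichotomy from Section~2: either some vertex is special, in which case option~(a) gives an inscribed octahedron at once, or every solid angle $A(v)$ lies in $\mathcal A\setminus\mathcal A_S$, the class $\phi(A(v))$ is defined, and it suffices to show $\phi(A(v))=0$ for every vertex. Then the sum in~\eqref{eq:hom-sum} equals $0\neq 1$ and option~(b) supplies the inscribed octahedron. By Lemma~\ref{lem:poly-inclusion} we have $\mathcal A_0\subseteq\mathcal A\setminus\mathcal A_S$, so under the hypothesis of the theorem every vertex in the second case lies in $(\mathcal A^3\setminus\mathcal A^3_S)\cup\mathcal A_0$.

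For the simple vertices $A(v)\in\mathcal A^3\setminus\mathcal A^3_S$, the equality $\phi(A(v))=0$ is precisely what was established in the proof of Theorem~\ref{thm:simple-inscr}, using Lemma~\ref{lem:non-sing-conn}, the local constancy of $\phi$, and the vanishing of $\phi$ on angles close to a halfspace. So the new content needed here is the analogous statement for $\mathcal A_0\cap\mathcal A^n$, $n\ge 3$.

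To prove $\phi\equiv 0$ on $\mathcal A_0\cap\mathcal A^n$, I would construct, for each $A\in\mathcal A_0\cap\mathcal A^n$, a continuous path inside $\mathcal A_0\cap\mathcal A^n$ joining $A$ to a near-halfspace $n$-gon. The natural deformation is a monotone spherical dilation of the polygon $\Pi(A)$: fix an interior point $p$ (say, the spherical centroid) and move each vertex of $\Pi$ outward along the geodesic through $p$. This produces a family $\Pi_t$ of convex spherical $n$-gons, $t\in[0,1]$, with $\Pi_0=\Pi$ and $\Pi_1$ as close as we wish to the open hemisphere centered at $p$. Since $\Pi_t\supseteq\Pi_0$ and $\Pi_0$ does not fit inside $T_0$, neither does any $\Pi_t$, so the whole path stays in $\mathcal A_0$. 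The endpoint represents a near-halfspace angle, on which $\phi=0$ by the observation at the end of Section~2, and local constancy of $\phi$ transports this value back along the path to $A$.

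The main obstacle is the geometric bookkeeping inside this deformation. One must choose $p$ (and, if necessary, a small generic perturbation of $\Pi$) so that no vertex of $\Pi$ is close to the antipode of $p$, and one must verify that the dilation preserves strict spherical convexity and keeps the number of vertices equal to $n$ throughout the motion, stopping just before any vertex reaches the equator of $p$. These are routine but delicate spherical-geometry checks, and I do not expect any conceptual difficulty in them. Once the deformation lemma is in place, the three ingredients — the special-angle shortcut, the simple-vertex control inherited from Theorem~\ref{thm:simple-inscr}, and the new vanishing of $\phi$ on $\mathcal A_0$ — assemble as described to complete the proof.
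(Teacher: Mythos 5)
Your proposal is correct and follows essentially the same route as the paper: the paper's proof also reduces to showing that any $A\in\mathcal A_0$ can be joined to a (near-)halfspace by a deformation that is monotone with respect to inclusion, noting that enlarging an angle that does not fit into $T_0$ cannot make it fit, and then invoking the local constancy of $\phi$ and its vanishing near halfspaces. Your explicit spherical-dilation construction is just a concrete realization of the paper's ``strong monotonic deformation,'' and your assembly of the dichotomy from Section~2 matches the argument the paper leaves implicit.
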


\begin{proof}
It remains to show that any $A\in \mathcal A_0$ can be deformed to a halfspace staying inside $\mathcal A_0$. We can make a strong monotonic (monotonic with respect to inclusion) deformation of $A$ to a halfspace, and obviously $A$ will remain in $\mathcal A_0$ under such a deformation.
\end{proof}




\nocite{karasev2010knaster}
\end{document}